\newcommand{\RR}{\mathbb{R}}
\newcommand{\CC}{\mathbb{C}}
\newcommand{\KK}{\mathbb{K}}
\newcommand{\rd}{\mathrm{d}}
\newcommand{\hsp}{\hspace*{1ex}}
\newcommand{\E}{E}
\newcommand{\Skew}{D} 
\newcommand{\VD}[2]{\nabla #1 \paren*{#2}}
\newcommand{\rU}{\mathrm{U}}
\newcommand{\rL}{\mathrm{L}}
\newcommand{\rX}{\mathrm{X}}
\newcommand{\cL}{\mathcal{L}}
\newcommand{\tE}{\tilde{E}}
\newcommand{\bu}{\bar{u}}
\newcommand{\Dt}{{\Delta t}}
\DeclarePairedDelimiter\paren{\lparen}{\rparen}
\DeclarePairedDelimiterX{\inpr}[2]{\langle}{\rangle}{{#1},{#2}}
\DeclarePairedDelimiterX{\L2ip}[2]{\langle}{\rangle_{L^2}}{{#1},{#2}}
\DeclarePairedDelimiterX{\setI}[2]{\{}{\}}{\,{#1}\ \delimsize| \ {#2}\,}
\DeclarePairedDelimiter{\abs}{|}{|}
\newtheorem{remark}{Remark}
\newtheorem{lemma}{Lemma}
\crefname{theorem}{Theorem}{Theorems}
\crefname{lemma}{Lemma}{Lemmas}
\crefname{proposition}{Proposition}{Propositions}
\crefname{corollary}{Corollary}{Corollaries}
\crefname{definition}{Definition}{Definitions}
\crefname{example}{Example}{Examples}
\crefname{remark}{Remark}{Remarks}
\crefname{figure}{Figure}{Figures}
\crefname{table}{Table}{Tables}
\newtheorem{assumption}{Assumption}
\crefname{assumption}{Assumption}{Assumptions}
\newtheorem{scheme}{Scheme}
\crefname{scheme}{Scheme}{Schemes}
\crefname{part}{\S}{\S\S}
\crefname{chapter}{\S}{\S\S}
\crefname{section}{Section}{Sections} 
\crefname{subsection}{\S}{\S\S}
\title{Scalar auxiliary variable approach \\ for conservative/dissipative partial differential equations \\ with unbounded energy}
\author{
    Tomoya Kemmochi%
    \thanks{
        Corresponding author. 
        Department of Applied Physics,
        Graduate School of Engineering,
        Nagoya University,
        Furo-cho, Chikusa-ku, Nagoya, Aichi, 464-8603, Japan. E-mail:
        \texttt{kemmochi@na.nuap.nagoya-u.ac.jp}
    }~
    and
    Shun Sato%
    \thanks{ 
        Department of Mathematical Informatics,
        Graduate School of Information Science and Technology,
        The University of Tokyo,
        Hongo 7-3-1, Bunkyo-ku, Tokyo, 113-0033, Japan.
    }
}
\date{}
\begin{document}
\maketitle

\begin{abstract}
In this paper, we present a novel investigation of the so-called SAV approach, which is a framework to construct linearly implicit geometric numerical integrators for partial differential equations with variational structure.
SAV approach was originally proposed for the gradient flows that have lower-bounded nonlinear potentials such as the Allen-Cahn and Cahn-Hilliard equations, and this assumption on the energy was essential.
In this paper, we propose a novel approach to address 
gradient flows with unbounded energy such as the KdV equation 
by a decomposition of energy functionals. 
Further, we will show that the equation of the SAV approach, which is a system of equations with scalar auxiliary variables, is expressed as another gradient system that inherits the variational structure of the original system.
This expression allows us to construct novel higher-order integrators by a certain class of Runge-Kutta methods.
We will propose second and fourth order schemes for conservative systems in our framework and present several numerical examples.

\end{abstract}

\section{Introduction}
\label{sec:intro}

In this paper, we consider geometric numerical integration of Partial Differential Equations (PDEs) of the form
\begin{equation}\label{eq:variationalPDEs}
    \dot{u} = \Skew \VD{\E}{u},
\end{equation}
where $ \Skew $ is a skew-adjoint or negative semidefinite operator defined over an appropriate Hilbert space $V$,
$ \nabla E $ is the Fr\'echet derivative of a functional $ \ E $, 
and $\dot{u}$ denotes the temporal derivative. (The details of the setting will be described later.) 
A class of PDEs of the form~\eqref{eq:variationalPDEs} involves many physically important equations, e.g., 
the Korteveg--de Vries equation, the Camassa--Holm equation~\cite{CH1993}, the Cahn--Hilliard equation~\cite{CH1958} and the Swift--Hohenberg equation~\cite{SH1977}. 
This paper is  devoted to novel investigation of the SAV approach for these equations, which is a framework to construct linearly implicit geometric numerical integrators recently proposed in~\cite{SXY2019JCP}.

A prominent property of the above PDEs is a conservation/dissipation law: 
\begin{equation}
    \frac{\rd}{\rd t} \E (u(t)) = \inpr*{ \VD{\E}{u} }{ \dot{u} } = \inpr*{ \VD{\E}{u} }{ \Skew \VD{\E}{u} }  \begin{cases} = 0 \quad (D\text{: skew-adjoint}),\\ \le 0 \quad (D\text{: negative semidefinite}),\end{cases}
\end{equation}
where $\inpr{\cdot}{\cdot}$ denotes the inner-product of $V$.
Therefore, for these PDEs, countless numerical schemes inheriting the conservation/dissipation law have been considered in the literature. 

In particular, there are some unified approaches to construct such schemes. 
The discrete gradient method~\cite{G1996,MQR1998,MQR1999} was originally devised for conservative/dissipative ODEs, which is now also used for PDEs in the form~\eqref{eq:variationalPDEs}~\cite{CGMMOOQ2012}. 
On the other hand, Furihata~\cite{F1999} independently proposed the Discrete Variational Derivative Method (DVDM) (see also~\cite{FM1998}) for the variational PDEs, which is now recognized as a combination of the discrete gradient method and an appropriate spatial discretization. 
The schemes based on the discrete gradient method (or DVDM) are often superior to general-purpose methods, in particular for a long run or numerically tough problems. 
Thus, they have been applied to many PDEs (see \cite{FM2011} and references therein).

However, at the price of their superiority, these schemes are unavoidably fully implicit and thus computationally expensive. 
Therefore, several remedies have been discussed in the literature. 
Some researchers have constructed structure-preserving and linearly implicit schemes for each specific PDE, for example, Besse~\cite{B2004} and Zhang, P\'{e}rez-Garc\'{\i}a, and V\'{a}zquez~\cite{ZPV1995} devised such schemes for the nonlinear Schr\"odinger equation. 
Moreover, for polynomial energy function, 
Matsuo and Furihata~\cite{MF2001} proposed a multistep linearly implicit version of the DVDM (see also Dahlby and Owren~\cite{DO2011}).

Most recently, for energy functions bounded below, Yang and Han~\cite{YH2017JCP} proposed an Invariant Energy Quadratization (IEQ) approach (see also~\cite{HBYT2017SC,Y2016JCP,ZWY2017IJNME}). 
A typical target of the IEQ approach is the Allen--Cahn equation~\cite{AC1979}
\begin{align}\label{eq:AC}
    u_t &= - \VD{\E}{u} = \Delta u - F'(u), &
    \E (u) &= \int_{\Omega} \paren*{ \frac{1}{2} \abs{\nabla u }^2 + F (u) } \rd x,
\end{align}
where $ F (u) = \frac{1}{4} \paren*{ u^2 - 1 }^2 $ is a potential function. 
Since $ F $ is non-negative, we can introduce an auxiliary function $ q = \sqrt{ F(u) + a } $ ($a$: a positive constant), and rewrite the Allen--Cahn equation~\eqref{eq:AC}. 
The reformulated one has a dissipation law of the modified energy 
$ \int_{\Omega} \paren*{ \frac{1}{2} \abs*{ \nabla u }^2 + q^2 } \rd x $. 
Since the modified energy is quadratic, we can easily construct dissipative schemes (see, e.g.,~\cite{GZW2019CPC,ZQS2020AML}). 
For example, we can use the implicit midpoint rule and its extensions, Gauss methods (see, e.g., \cite[Chapter~IV]{HLW2010}). 
Moreover, it is possible to construct linearly implicit schemes (see, e.g.,~\cite{MR4116799,HBYT2017SC,Y2016JCP,YH2017JCP,ZWY2017IJNME}). 

Though the IEQ approach is successful, it is still a bit expensive due to the presence of the auxiliary function. 
To overcome it, Shen, Xu, and Yang~\cite{SXY2019JCP} proposed a Scalar Auxiliary Variable (SAV) approach (see also \cite{SXY2019SIREV}). 
There, instead of the auxiliary function in the IEQ approach, a scalar auxiliary variable $ r = \sqrt{ \E_1 (u) + a } $ is introduced, where $ \E_1 (u) = \int_{\Omega} F(u) \rd x $. Then the Allen--Cahn equation~\eqref{eq:AC} can be rewritten in the form 
\begin{equation}\label{eq:AC_SAV}
    \begin{cases} 
    {\displaystyle u_t = \Delta u - \frac{ r }{\sqrt{\E_1 (u)+a}} F'(u),}\\[10pt]
    {\displaystyle \dot{r} = \frac{1}{2\sqrt{\E_1(u)+a}} \inpr*{ F'(u) }{ u_t} .}
    \end{cases}
\end{equation}
Again, since the modified energy $ \int_{\Omega} \frac{1}{2} \abs*{ \nabla u }^2 \rd x + r^2 $ is quadratic, we can construct dissipative schemes in several ways. 
The SAV approach often provides us with quite efficient numerical schemes: 
in addition to being the smaller system than the IEQ approach, in each step, the principal part is to solve two linear equations having the same $ d \times d $ constant coefficient matrix. 

Due to its attractive properties, the SAV approach has been intensively studied in these years. 
It has been applied to many PDEs, for example, the two-dimensional sine-Gordon equation~\cite{CJWS2019JCP}, the fractional nonlinear Schr\"odinger equation~\cite{FCW2019arx}, the Camassa--Holm equation~\cite{MR4083210}, and the imaginary time gradient flow~\cite{QJ2019JCP}. 
Shen and Xu~\cite{SX2018SINUM} and Li, Shen and Rui~\cite{LSR2019MC} conducted a convergence analysis of SAV schemes. 
Akrivis, Li, and Li~\cite{ALL2019SISC} devised and analyzed linear and high order SAV schemes. 
Moreover, the SAV approach is extended in several ways, for example, 
multiple SAV~\cite{CS2018SISC} and generalized Positive Auxiliary Variable (gPAV)~\cite{YD2020JCP}. 

Nevertheless, at least, there remain two issues on the SAV approach. 
First, the essential assumption ``the energy functional is bounded below'' is restrictive. 
In particular, when we deal with conservative PDEs, 
we often encounter an unbounded energy functional: for example, the KdV equation
\begin{align}\label{eq:KdV}
u_t &= \partial_x \VD{\E}{u},&
\E (u) &= \int_{\Omega} \paren*{ \frac{1}{2} \paren*{ u_x }^2 - u^3 } \rd x.
\end{align}
Second, the scalar auxiliary variable $r$ in the modified equation~\eqref{eq:AC_SAV} seems to be introduced in an unnatural way. 
As a consequence of this unclear derivation, the construction of resulting structure-preserving schemes is ad hoc.

There are some attempts to resolve the first problem~\cite{L2019arx1,L2019arx2,LL2019AML,MR4097160,LL2019arx2,bo2020arbitrary,cheng2020generalized}.
The strategy of these studies is to change the definition of the scalar auxiliary variable.
For example, in some of them, the authors replace the square-root function of the definition of $r$, such as the exponential function. 
Then, the auxiliary variable is always well-defined and thus the assumption for the energy functional can be removed.
Although these studies successfully overcome the first problem, the second issue, namely the natural derivation of the SAV schemes, is still remained.

The aim of this paper is twofold.
First we propose a novel approach to deal with unbounded energy functions by a decomposition of the energy functional.
Second, we present a new interpretation of the SAV approach using gradient system expression.
Furthermore, combining the above two results, we propose several numerical schemes in the framework of Crank-Nicolson method and the Runge-Kutta methods.

In order to address general energy functionals, we assume that the energy $E$ is expressed as
\begin{equation}
    E(u) = \frac{1}{2} \inpr{u}{Lu} + E_\rL(u) - E_\rU(u)
\end{equation}
for some linear operator $L$ and lower-bounded functions $E_\rL$ and $E_\rU$ (see~\cref{assumption}).
We then introduce two auxiliary variables $r_\rX$ ($\rX = \rL$ or $\rU$) by
\begin{equation}
    r_\rX(t) \coloneqq \sqrt{E_\rX(u(t)) + a_\rX}
\end{equation}
for some real numbers $a_\rX$.
In fact, the above decomposition of $E$ is always possible (\cref{rem:decomposition}), and thus we can address general (possibly unbounded) energy functionals by this approach.

We then address the second problem: what is a natural way to construct SAV schemes?
To answer this question, we begin with the modified energy
\begin{equation}
    \tE(u, r_\rL, r_\rU) = \frac{1}{2} \inpr{u}{Lu} + r_\rL^2 - r_\rU^2,
\end{equation}
which is a quadratic function.
The gradient of $\tE$ is then $\nabla \tE = [Lu, 2r_\rL, -2r_\rU]^T$.
With these notations, we will show that the equation of the SAV approach (such as \eqref{eq:AC_SAV}) is expressed as the gradient system for the modified functional $\tE$.
Further, this modified system inherits the conservation/dissipation property (i.e., variational structure) of the original gradient system \eqref{eq:variationalPDEs} (see~\cref{lem:operator-sav}).
From this point of view, existing SAV schemes are interpreted as a kind of discrete gradient method.

It is advantageous to keep the modified energy quadratic for the conservative case.
Indeed, it is known that a certain class of Runge-Kutta methods preserves quadratic invariants~\cite{HLW2010}.
Therefore, it is easy to construct higher-order time marching methods in the framework of Runge-Kutta methods and the SAV aproach, with the aid of our gradient system expression.

The remainder of this paper is organized as follows. 
In \cref{sec:grad}, we present the novel gradient system expression of the SAV approach, and show that the novel gradient system inherits the variational structure of the original system.
Then, we propose second and fourth order schemes in the framework of our expression in \cref{sec:proposed}.
Finally, in \cref{sec:ne}, we present some numerical examples of our scheme to confirm the efficiency and we conclude this paper in \cref{sec:cr}.

\section{Gradient system expression of SAV}
\label{sec:grad}

In this section, we present a novel interpretation of the SAV approach, namely, gradient system expression.
We state our result in an abstract setting.

Let $V$ be a Hilbert space over $\KK$ equipped with an inner-product $\inpr{\cdot}{\cdot}$, 
where $\KK=\RR$ or $\KK=\CC$, and $V^*$ be its dual.
Let $E$ be a $\KK$-valued $C^1$ function defined on some open subset of $V$, which is possibly unbounded, and $\nabla E(u) \colon V \to \KK$ be the Fr\'echet derivative of $E$ at $u \in V$.
We further let $ \Skew $ be a skew-adjoint or negative semidefinite linear operator on $V$ (not necessarily bounded).
Then, we consider the gradient system of the form
\begin{equation}
    \dot{u}(t) = \Skew \nabla E(u(t)), \qquad t \in (0,T),
    \label{eq:abstract-gf}
\end{equation}
where $u \colon (0,T) \to V$ is an unknown function and $T \in (0,\infty]$.
We assume that all of the terms in \eqref{eq:abstract-gf} is well-defined.

We present SAV approach for the general gradient flow \eqref{eq:abstract-gf}.
To begin with, we make an assumption on the energy functional $E$.
\begin{assumption}\label{assumption}
The energy functional $E$ can be decomposed into three parts as
\begin{equation}
    E(u) = \frac{1}{2} \inpr{u}{Lu} + E_\rL(u) - E_\rU(u), 
    \label{eq:assumption}
\end{equation}
where $L$ is a linear, self-adjoint, and positive semidefinite operator on $V$, and both $E_\rL$ and $E_\rU$ are lower-bounded $C^1$ functionals.
\end{assumption}

\begin{remark}\label{rem:decomposition}
Any function can be decomposed as in the above assumption.
Indeed, letting $\tilde{E}(u) = E(u) - \frac{1}{2}\inpr{u}{Lu}$, obviously we can see 
\begin{align}
    \tilde{E}(u) &= \left( \tilde{E}(u) + \tilde{E}(u)^2 \right) - \tilde{E}(u)^2 \\
                 &= \tilde{E}(u)^2 - \left( \tilde{E}(u)^2 - \tilde{E}(u) \right) \\
                 &= \left( \tilde{E}(u) + \tilde{E}(u)^4 \right) - \tilde{E}(u)^4 \\
                 &= \cdots.
\end{align}
Therefore, \cref{assumption} is just a notation.
Note that the decomposition is not unique.
\end{remark}

We now introduce two scalar auxiliary variables $r_\rL$ and $r_\rU$ by
\begin{equation}
    r_\rX(t) \coloneqq \sqrt{E_\rX(u(t)) + a_\rX},
\end{equation}
where $\rX = \rL$ or $\rU$ and $a_\rX > - \inf_u E_\rX(u)$ are real constants.
Then we define a modified energy functional by
\begin{equation}
    \tE(u, r_\rL, r_\rU) = \frac{1}{2} \inpr{u}{Lu} + r_\rL^2 - r_\rU^2,
\end{equation}
which is quadratic, and thus $\nabla \tE = [Lu, 2r_\rL, -2r_\rU]^T$.
Differentiating the scalar auxiliary variables, we have
\begin{equation}
    \dot{r}_\rX(t) = \frac{1}{ 2 \sqrt{E_\rX(u(t)) + a_\rX} } \inpr*{\nabla E_\rX(u(t))}{\dot{u}(t)} = \inpr{\phi_\rX}{\dot{u}(t)},
\end{equation}
where
\begin{equation}
    \phi_\rX(t) \coloneqq \frac{1}{ 2 \sqrt{E_\rX(u(t)) + a_\rX} } \nabla E_\rX(u(t)) \quad \in V^* \cong V.
\end{equation}
Although $\phi_\rX$ depends on $u$, we abbreviate the dependency to simplify the notation.
With this notation, the time derivative of the modified energy functional becomes
\begin{equation}
    \frac{\rd}{\rd t} \tE(u, r_\rL, r_\rU) = \inpr*{Lu + 2 r_\rL \phi_\rL - 2 r_\rU \phi_\rU}{\dot{u}}.
\end{equation}

According to the above observation, we consider the following equation
\begin{equation}
    \begin{dcases}
        \dot{u}(t) = \Skew \paren*{Lu + 2 r_\rL \phi_\rL - 2 r_\rU \phi_\rU}, \\
        \dot{r}_\rL(t) = \inpr{\phi_\rL}{\dot{u}(t)}, \\
        \dot{r}_\rU(t) = \inpr{\phi_\rU}{\dot{u}(t)}.
    \end{dcases}
    \label{eq:abstract-SAV}
\end{equation}
We show that the system \eqref{eq:abstract-SAV} can be expressed as a gradient system.
For each time $t$, define a linear operator $\Phi_\rX(t) \in V^*$ by $\Phi_\rX(t)(v) = \inpr*{\phi_\rX(t)}{v}$ $(v \in V)$. 
The adjoint operator of $\Phi_\rX$ is a multiplication operator $\Phi^*_\rX(t) (r) = r \phi_\rX(t)$.
Then, \eqref{eq:abstract-SAV} becomes
\begin{equation}
    \begin{bmatrix}
        I & 0 & 0 \\
        - \Phi_\rL & 1 & 0 \\
        - \Phi_\rU & 0 & 1 \\
    \end{bmatrix}
    \frac{\rd}{\rd t} 
    \begin{bmatrix} u \\ r_\rL \\ r_\rU \end{bmatrix}
    =
    \begin{bmatrix}
        \Skew & 0 & 0 \\
        0 & 0 & 0 \\
        0 & 0 & 0
    \end{bmatrix}
    \begin{bmatrix}
        I & \Phi^*_\rL & \Phi^*_\rU \\
        0 & 1 & 0 \\
        0 & 0 & 1
    \end{bmatrix}
    \nabla \tE(u, r_\rL, r_\rU),
\end{equation}
where $I$ is the identity operator on $V$.
Since it is easy to see that
\begin{equation}
    \begin{bmatrix}
    I & 0 & 0 \\
    - \Phi_\rL & 1 & 0 \\
    - \Phi_\rU & 0 & 1 \\
    \end{bmatrix}^{-1}
    =
    \begin{bmatrix}
        I & 0 & 0 \\
        \Phi_\rL & 1 & 0 \\
        \Phi_\rU & 0 & 1 \\
    \end{bmatrix},
\end{equation}
we obtain
\begin{equation}
    \frac{\rd}{\rd t} 
    \begin{bmatrix} u \\ r_\rL \\ r_\rU \end{bmatrix}
    =
    \begin{bmatrix}
        I & 0 & 0 \\
        \Phi_\rL & 1 & 0 \\
        \Phi_\rU & 0 & 1 \\
    \end{bmatrix}
    \begin{bmatrix}
        \Skew & 0 & 0 \\
        0 & 0 & 0 \\
        0 & 0 & 0
    \end{bmatrix}
    \begin{bmatrix}
        I & \Phi^*_\rL & \Phi^*_\rU \\
        0 & 1 & 0 \\
        0 & 0 & 1
    \end{bmatrix}
    \nabla \tE(u, r_\rL, r_\rU).
    \label{eq:abstract-SAV-gf}
\end{equation}
Hence, letting $Z = V \times \RR \times \RR$, $z(t) = [u(t), r_\rL(t), r_\rU(t)]^T$, and 
\begin{equation}
    \cL(u) = 
    \begin{bmatrix}
        I & 0 & 0 \\
        \Phi_\rL(u) & 1 & 0 \\
        \Phi_\rU(u) & 0 & 1 \\
    \end{bmatrix}
    \begin{bmatrix}
        \Skew & 0 & 0 \\
        0 & 0 & 0 \\
        0 & 0 & 0
    \end{bmatrix}
    \begin{bmatrix}
        I & \Phi^*_\rL(u) & \Phi^*_\rU(u) \\
        0 & 1 & 0 \\
        0 & 0 & 1
    \end{bmatrix}
    \colon Z \to Z,
    \quad u \in V,
\end{equation}
we can write \eqref{eq:abstract-SAV-gf} as
\begin{equation}
    \dot{z}(t) = \cL(u(t)) \nabla \tE(z(t)),
    \label{eq:abstract-SAV-gf-short}
\end{equation}
which is nothing but a gradient system in $Z$.
In fact, the system \eqref{eq:abstract-SAV-gf-short} inherits the structure of the original gradient system \eqref{eq:abstract-gf}.

\begin{lemma}\label{lem:operator-sav}
If $\Skew$ is skew-symmetric, then $\cL(u)$ is as well.
If $\Skew$ is negative semidefinite, then $\cL(u)$ is as well.
\end{lemma}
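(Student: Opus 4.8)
The plan is to recognize the three block factors defining $\cL(u)$ as a \emph{congruence}. Writing
\begin{equation}
    P = \begin{bmatrix} I & 0 & 0 \\ \Phi_\rL & 1 & 0 \\ \Phi_\rU & 0 & 1 \end{bmatrix}, \qquad S = \begin{bmatrix} \Skew & 0 & 0 \\ 0 & 0 & 0 \\ 0 & 0 & 0 \end{bmatrix}
\end{equation}
for the first and middle factors and denoting by $Q$ the rightmost factor, the crucial observation is that $Q = P^*$, the adjoint of $P$ with respect to the product inner product $\inpr{[v,s,t]}{[v',s',t']}_Z = \inpr{v}{v'} + ss' + tt'$ on $Z = V \times \RR \times \RR$. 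The first step is to confirm this: the block adjoint of an operator matrix sends each off-diagonal entry to the adjoint of its transpose, so it suffices to check that the adjoint of $\Phi_\rX \colon V \to \RR$, $v \mapsto \inpr{\phi_\rX}{v}$, is the multiplication map $r \mapsto r\phi_\rX$ --- which is exactly the definition of $\Phi^*_\rX$ given above. Hence $\cL(u) = P S P^*$.

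With this factorization in hand, both statements follow from the corresponding property of the middle block $S$. For the skew-symmetric case, I would note that $S^* = -S$ because $\Skew^* = -\Skew$ and the other blocks vanish, whence
\begin{equation}
    \cL(u)^* = (P S P^*)^* = P S^* P^* = -P S P^* = -\cL(u).
\end{equation}
For the negative semidefinite case, $S$ is itself negative semidefinite, since $\inpr{S z}{z}_Z = \inpr{\Skew v}{v} \le 0$ for $z = [v,s,t]^T$; then, transferring $P^*$ onto the second argument of the inner product,
\begin{equation}
    \inpr{\cL(u) z}{z}_Z = \inpr{S P^* z}{P^* z}_Z \le 0.
\end{equation}

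The block multiplications and the adjoint of a $2\times2$ operator matrix are routine once the congruence structure is identified, so the real content is the identity $Q = P^*$. The one point demanding care is that $\Skew$ may be unbounded: I would therefore carry out the argument on the domain where every term of \eqref{eq:abstract-SAV-gf} is well-defined (as already assumed for \eqref{eq:abstract-gf}), noting that $\Phi_\rX$ and $\Phi^*_\rX$ are bounded rank-one maps so that $P$ and $P^*$ cause no domain trouble. This legitimizes both the manipulation $(P S P^*)^* = P S^* P^*$ and the transfer of $P^*$ across the inner product in the semidefinite estimate, which are the only places where boundedness is implicitly used.
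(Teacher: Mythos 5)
Your proof is correct and matches the paper's reasoning: the paper dismisses the lemma as ``obvious by the definition of $\cL(u)$,'' and the congruence structure $\cL(u) = P S P^*$ that you identify (the rightmost factor being the adjoint of the leftmost) is precisely what makes it obvious. Spelling out $(PSP^*)^* = PS^*P^* = -PSP^*$ and $\inpr{PSP^*z}{z}_Z = \inpr{SP^*z}{P^*z}_Z \le 0$ is exactly the intended argument, just made explicit.
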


\begin{proof}
The statement is obvious by the definition of $\cL(u)$.
\end{proof}

\begin{remark}
The expression \eqref{eq:abstract-SAV-gf-short} (or equivalently \eqref{eq:abstract-SAV-gf}) is generalization of \eqref{eq:AC_SAV}.
Indeed, if $\Skew = -I$, $L=-\Delta$, $E_\rL(u) = \int_\Omega F(u) dx$, and $E_\rU(u) = 0$, we obtain \eqref{eq:AC_SAV}.
\end{remark}

\begin{remark}
The system \eqref{eq:abstract-SAV-gf-short} is easily obtained by substituting the first equation of \eqref{eq:abstract-SAV} into the second and the third ones.
However, it is not clear that the resulting equation by the naive calculation inherits the variational structure of the original equation.
Hence, not the system \eqref{eq:abstract-SAV-gf-short} itself but the above procedure is significant to see the variational structure of \eqref{eq:abstract-SAV}.
\end{remark}

\section{Proposed schemes}
\label{sec:proposed}

The new expression \eqref{eq:abstract-SAV-gf-short} gives insights to construct structure-preserving schemes for the gradient system \eqref{eq:abstract-gf}.
In this section, we propose a unified approach to construct numerical schemes for gradient systems based on \eqref{eq:abstract-SAV-gf-short}.
Let $\Dt>0$ be time increment, $t_n = n\Dt$, $z^n \approx z(t_n)$, and $z^n = [u^n, r^n_\rL, r^n_\rU]^T$.
We use the same notation as in the previous section.

\subsection{Second order scheme}

Let $z^{n+1/2} = (z^{n+1} + z^n)/2$.
Then, usual Crank-Nicolson scheme for \eqref{eq:abstract-SAV-gf-short} is
\begin{equation}
    \frac{z^{n+1} - z^n}{\Dt} = \cL\paren*{u^{n+1/2}} \nabla \tE \paren*{z^{n+1/2}}, 
\end{equation}
which is a nonlinear scheme.
We then replace the first $u^{n+1/2}$ by any vector that is (locally) $O(\Dt^2)$-approximation of $u(t_n + \frac{\Dt}{2})$, and obtain a Crank-Nicolson-type SAV scheme.

\begin{scheme}\label{scheme:sav-cn}
For given $z^n \in Z$, find $z^{n+1} \in Z$ that satisfies
\begin{equation}
    \frac{z^{n+1} - z^n}{\Dt} = \cL\paren*{\bu^{n+1/2}} \nabla \tE \paren*{z^{n+1/2}},
    \label{eq:sav-cn}
\end{equation}
where $\bu^{n+1/2}$ is any (locally) $O(\Dt^2)$-approximation of $u^{n+1/2}$.
\end{scheme}

For example, we can choose 
\begin{equation}
    \bu^{n+1/2} =
    \begin{dcases}
        u^0, & (n=0), \\
        \frac{3u^n - u^{n-1}}{2}, & (n \ge 1)
    \end{dcases}
\end{equation}
as in the literature.
Alternatively, we can determine $\bu^{n+1/2}$ by the forward Euler or exponential Euler method with time increment $\Dt/2$.
Regardless of the choice of $\bu^{n+1/2}$, \cref{scheme:sav-cn} has energy dissipation/preservation property
for the modified discrete energy functional defined by
\begin{equation}
    \tE^n \coloneqq \frac{1}{2} \inpr*{u^n}{Lu^n} + \paren*{r_\rL^n}^2 - \paren*{r_\rU^n}^2,
    \label{eq:disc-mod-Hamil}
\end{equation}
where $z^n = [u^n, r_\rL^n, r_\rU^n]^T \in Z$ is the solution of \eqref{eq:sav-cn}.

\begin{lemma}\label{lem:sav-cn}
    Let $z^n = [u^n, r_\rL^n, r_\rU^n] \in Z$ be the solution of \eqref{eq:sav-cn}.
    If $\Skew$ is skew-symmetric, then the modified energy $\tE^n$ is preserved, namely, $\tE^{n+1} = \tE^n$.
    If $\Skew$ is negative semidefinite, then the modified energy $\tE^n$ is dissipative, namely, $\tE^{n+1} \le \tE^n$.
\end{lemma}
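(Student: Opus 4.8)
The plan is to exploit that the modified energy $\tE$ is a \emph{quadratic} functional, so that the Crank--Nicolson increment satisfies an exact discrete chain rule, and then to invoke \cref{lem:operator-sav} to control the sign.

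First I would establish the key identity that, because $\tE$ is quadratic and $z^{n+1/2} = \paren*{z^{n+1}+z^n}/2$, the exact energy difference equals the midpoint gradient paired with the increment:
\begin{equation}
    \tE^{n+1} - \tE^n = \inpr*{\nabla \tE\paren*{z^{n+1/2}}}{z^{n+1} - z^n}.
\end{equation}
I would verify this term by term against $\nabla \tE = [Lu, 2r_\rL, -2r_\rU]^T$. For the quadratic $L$-term one uses the self-adjointness of $L$ to get $\inpr{Lu^{n+1/2}}{u^{n+1}-u^n} = \frac{1}{2}\paren*{\inpr{u^{n+1}}{Lu^{n+1}} - \inpr{u^n}{Lu^n}}$; for each scalar term one uses the elementary factorization $2 r_\rX^{n+1/2}\paren*{r_\rX^{n+1}-r_\rX^n} = \paren*{r_\rX^{n+1}}^2 - \paren*{r_\rX^n}^2$. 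Adding these (with the sign $-$ on the $r_\rU$ contribution) reproduces exactly $\tE^{n+1}-\tE^n$ as defined in \eqref{eq:disc-mod-Hamil}. This step is the crux of the argument and is precisely where the quadratic structure of $\tE$ is essential; the identity would only be approximate for a general nonlinear energy, which is why the paper stresses keeping the modified energy quadratic.

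Next I would substitute the scheme. From \eqref{eq:sav-cn} we have $z^{n+1}-z^n = \Dt\, \cL\paren*{\bu^{n+1/2}}\, \nabla \tE\paren*{z^{n+1/2}}$, hence, writing $w = \nabla \tE\paren*{z^{n+1/2}}$,
\begin{equation}
    \tE^{n+1} - \tE^n = \Dt \inpr*{w}{\cL\paren*{\bu^{n+1/2}}\, w}.
\end{equation}
Finally I would apply \cref{lem:operator-sav}: when $\Skew$ is skew-symmetric, $\cL\paren*{\bu^{n+1/2}}$ is skew-symmetric, so $\inpr{w}{\cL(\bu^{n+1/2})w} = 0$ and $\tE^{n+1} = \tE^n$; when $\Skew$ is negative semidefinite, $\cL\paren*{\bu^{n+1/2}}$ is negative semidefinite, so $\inpr{w}{\cL(\bu^{n+1/2})w} \le 0$ and $\tE^{n+1} \le \tE^n$.

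I do not expect a serious obstacle. The one point requiring care is that $\cL$ is evaluated at the \emph{approximation} $\bu^{n+1/2}$ rather than at $u^{n+1/2}$ itself; but \cref{lem:operator-sav} asserts skew-symmetry (resp.\ negative semidefiniteness) of $\cL(u)$ for \emph{every} $u \in V$, so the sign of the quadratic form $\inpr{w}{\cL(\bu^{n+1/2})w}$ — and therefore the conservation/dissipation conclusion — is completely insensitive to the particular extrapolation $\bu^{n+1/2}$. This robustness is exactly what allows the linearization to preserve the structure, as claimed in the text. (In the complex case $\KK=\CC$, one reads the pairing above as the real inner product, as is standard for such energy identities.)
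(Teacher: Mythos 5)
Your proposal is correct and follows essentially the same route as the paper's proof: both establish the exact discrete chain rule $\tE^{n+1}-\tE^n = \bigl[\, z^{n+1}-z^n,\ \nabla\tE(z^{n+1/2}) \,\bigr]_Z$ via the self-adjointness of $L$ and the midpoint factorization of the scalar terms, then substitute the scheme \eqref{eq:sav-cn} and conclude from \cref{lem:operator-sav}, noting that the sign argument holds for any choice of $\bu^{n+1/2}$. Your write-up merely spells out the term-by-term verification that the paper leaves implicit; there is no substantive difference.
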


\begin{proof}
We show energy preservation property only. The proof of the energy dissipation is the same.
By the symmetry of $L$, we have
\begin{align}
    \frac{\tE^{n+1} - \tE^n}{\Dt} 
    &= \inpr*{\frac{u^{n+1} - u^n}{\Dt}}{L u^{n+1/2}} + \frac{r^{n+1}_\rL - r^n_\rL}{\Dt} \cdot 2 r^{n+1/2}_\rL - \frac{r^{n+1}_\rU - r^n_\rU}{\Dt} \cdot 2 r^{n+1/2}_\rU \\
    &= \left[ \frac{z^{n+1} - z^n}{\Dt}, \nabla\tE(z^{n+1/2}) \right]_Z,
\end{align}
where $[\cdot, \cdot]_Z$ is the inner-product in $Z = V \times \RR \times \RR$ induced from the direct product.
Since $z^n$ satisfies \eqref{eq:sav-cn}, we obtain
\begin{equation}
    \frac{\tE^{n+1} - \tE^n}{\Dt} = \left[ \cL(\bu^{n+1/2}) \nabla \tE(z^{n+1/2}), \nabla\tE(z^{n+1/2}) \right]_Z.
\end{equation}
This implies $\tE^{n+1} = \tE^n$ because $\cL(\bu^{n+1/2}) $ is skew-symmetric from \cref{lem:operator-sav} for any $\bu^{n+1/2}$.
\end{proof}

We show that it is essentially sufficient to solve a partial differential equation with constant coefficients three times to get the solution of \cref{scheme:sav-cn} at each time step, as in the original SAV schemes.
It is clear that \eqref{eq:sav-cn} is equivalent to
\begin{equation}
    \frac{1}{\Dt}
    \begin{bmatrix}
        I & 0 & 0 \\
        - \Phi_\rL & 1 & 0 \\
        - \Phi_\rU & 0 & 1 \\
    \end{bmatrix}
    \begin{bmatrix} u^{n+1} - u^n \\ r_\rL^{n+1} - r_\rL^n \\ r_\rU^{n+1} - r_\rU^n \end{bmatrix}
    =
    \begin{bmatrix}
        \Skew & \Skew \Phi^*_\rL & \Skew \Phi^*_\rU \\
        0 & 0 & 0 \\
        0 & 0 & 0
    \end{bmatrix}
    \begin{bmatrix} L u^{n+1/2} \\ 2 r_\rL^{n+1/2} \\ -2 r_\rU^{n+1/2} \end{bmatrix},
\end{equation}
where $\Phi_\rX = \Phi_\rX\paren*{\bar{u}^{n+1/2}}$.
This yields
\begin{equation}
    \begin{bmatrix}
        I - \frac{\Dt}{2} \Skew L \hsp & - \Dt \Skew \Phi^*_\rL \hsp & \Dt \Skew \Phi^*_\rU \\
        - \Phi_\rL & 1 & 0 \\
        - \Phi_\rU & 0 & 1 
    \end{bmatrix}
    \begin{bmatrix} u^{n+1} \\ r_\rL^{n+1} \\ r_\rU^{n+1}  \end{bmatrix}
    =
    \begin{bmatrix}
        I + \frac{\Dt}{2} \Skew L \hsp & \Dt \Skew \Phi^*_\rL \hsp & - \Dt \Skew \Phi^*_\rU \\
        - \Phi_\rL & 1 & 0 \\
        - \Phi_\rU & 0 & 1 
    \end{bmatrix}
    \begin{bmatrix} u^n \\ r_\rL^n \\ r_\rU^n \end{bmatrix}.
\end{equation}
Assume that the operator $J \coloneqq I - \frac{\Dt}{2} \Skew L$ is invertible.
Then, multiplying the last equation by the matrix
\begin{equation}
    \begin{bmatrix}
        I & 0 & 0 \\
        \Phi_\rL J^{-1} & 1 & 0 \\
        \Phi_\rU J^{-1} & 0 & 1
    \end{bmatrix}
\end{equation}
from the left, we have
\begin{multline}
    \begin{bmatrix}
        J \hsp & - \Dt \Skew \Phi^*_\rL                                & \hsp \Dt \Skew \Phi^*_\rU \\[0.5ex]
        0 \hsp & -\Dt \Phi_\rL J^{-1} \Skew \Phi_\rL^* + 1             & \hsp \Dt \Phi_\rL J^{-1} \Skew \Phi_\rU^* \hphantom{{}+1} \\[0.5ex]
        0 \hsp & -\Dt \Phi_\rU J^{-1} \Skew \Phi_\rL^* \hphantom{{}+1} & \hsp \Dt \Phi_\rU J^{-1} \Skew \Phi_\rU^* + 1
    \end{bmatrix}
    \begin{bmatrix} u^{n+1} \\ r_\rL^{n+1} \\ r_\rU^{n+1}  \end{bmatrix} \\
    =
    \begin{bmatrix}
        I + \frac{\Dt}{2} \Skew L   \hsp & \Dt \Skew \Phi^*_\rL                                 & \hsp 
        - \Dt \Skew \Phi^*_\rU \\[0.5ex]
        2 \Phi_\rL \paren*{J^{-1} - I} \hsp & \Dt \Phi_\rL J^{-1} \Skew \Phi_\rL^* + 1             & \hsp - \Dt \Phi_\rL J^{-1} \Skew \Phi_\rU^* \hphantom{{}+1} \\[0.5ex]
        2 \Phi_\rU \paren*{J^{-1} - I} \hsp & \Dt \Phi_\rU J^{-1} \Skew \Phi_\rL^* \hphantom{{}+1} & \hsp - \Dt \Phi_\rU J^{-1} \Skew \Phi_\rU^* + 1
    \end{bmatrix}
    \begin{bmatrix} u^n \\ r_\rL^n \\ r_\rU^n \end{bmatrix},
\end{multline}
which is just block Gauss elimination.
Therefore, we can decompose the equation into two parts. 
The first one is the two-dimensional linear system
\begin{multline}
    \begin{bmatrix}
        -\Dt \Phi_\rL J^{-1} \Skew \Phi_\rL^* + 1             & \hsp \Dt \Phi_\rL J^{-1} \Skew \Phi_\rU^* \hphantom{{}+1} \\[0.5ex]
        -\Dt \Phi_\rU J^{-1} \Skew \Phi_\rL^* \hphantom{{}+1} & \hsp \Dt \Phi_\rU J^{-1} \Skew \Phi_\rU^* + 1
    \end{bmatrix}
    \begin{bmatrix} r_\rL^{n+1} \\ r_\rU^{n+1}  \end{bmatrix} \\
    =
    \begin{bmatrix}
        2 \Phi_\rL \paren*{J^{-1} - I} \hsp & \Dt \Phi_\rL J^{-1} \Skew \Phi_\rL^* + 1             & \hsp - \Dt \Phi_\rL J^{-1} \Skew \Phi_\rU^* \hphantom{{}+1} \\[0.5ex]
        2 \Phi_\rU \paren*{J^{-1} - I} \hsp & \Dt \Phi_\rU J^{-1} \Skew \Phi_\rL^* \hphantom{{}+1} & \hsp - \Dt \Phi_\rU J^{-1} \Skew \Phi_\rU^* + 1
    \end{bmatrix}
    \begin{bmatrix} u^n \\ r_\rL^n \\ r_\rU^n \end{bmatrix},
\end{multline}
which is equivalent to
\begin{multline}
    \begin{bmatrix}
        -\Dt \inpr*{\phi_\rL}{J^{-1} \Skew \phi_\rL} + 1             & \hsp \Dt \inpr*{\phi_\rL}{J^{-1} \Skew \phi_\rU} \hphantom{{}+1} \\[0.5ex]
        -\Dt \inpr*{\phi_\rU}{J^{-1} \Skew \phi_\rL} \hphantom{{}+1} & \hsp \Dt \inpr*{\phi_\rU}{J^{-1} \Skew \phi_\rU} + 1
    \end{bmatrix}
    \begin{bmatrix} r_\rL^{n+1} \\ r_\rU^{n+1}  \end{bmatrix} \\[1ex]
    =
    \begin{bmatrix}
        2 \inpr*{\phi_\rL}{ \paren*{J^{-1} - I} u^n} + \left[ \Dt \inpr*{\phi_\rL}{J^{-1} \Skew \phi_\rL} + 1 \right] r^n_\rL - \Dt \inpr*{\phi_\rL}{J^{-1} \Skew \phi_\rU} r^n_\rU \\[1ex]
        2 \inpr*{\phi_\rU}{ \paren*{J^{-1} - I} u^n} + \Dt \inpr*{\phi_\rU}{J^{-1} \Skew \phi_\rL} r^n_\rL - \left[ \Dt \inpr*{\phi_\rU}{J^{-1} \Skew \phi_\rU} - 1 \right] r^n_\rU
    \end{bmatrix}.
    \label{eq:sav-cn-forward}
\end{multline}
The second one is 
\begin{equation}
    J u^{n+1} - \Dt \Skew \Phi^*_\rL r^{n+1}_\rL + \Dt \Skew \Phi^*_\rU r^{n+1}_\rU 
    = \paren*{I + \frac{\Dt}{2} \Skew L} u^n + \Dt \Skew \Phi^*_\rL r^n_\rL - \Dt \Skew \Phi^*_\rU r^n_\rU, 
\end{equation}
which is, in this case, equivalent to
\begin{equation}
    u^{n+1} = \paren*{2J^{-1} - I} u^n + \Dt \paren*{r^{n+1}_\rL + r^n_\rL} J^{-1} D \phi_\rL - \Dt \paren*{r^{n+1}_\rU + r^n_\rU} J^{-1} D \phi_\rU.
    \label{eq:sav-cn-backward}
\end{equation}

We can summarize the above procedure as the following algorithm.
\begin{enumerate}
    \item Compute $J^{-1}u^n$, $J^{-1} D \phi_\rL \paren*{\bar{u}^{n+1/2}}$, and $J^{-1} D \phi_\rU \paren*{\bar{u}^{n+1/2}}$.
    \item Compute $r^{n+1}_\rL$ and $r^{n+1}_\rU$ by solving \eqref{eq:sav-cn-forward}.
    \item Compute $u^{n+1}$ by using \eqref{eq:sav-cn-backward}.
\end{enumerate}
For the first step, we should solve the equation of the form $J w = f$ three times.
Therefore, it is sufficient to solve a partial differential equation with constant coefficients to get the solution of \cref{scheme:sav-cn} at each time step.

\subsection{Fourth order scheme for conservative systems}

In this subsection, we focus on the conservative systems, namely, suppose $\Skew$ is skew-adjoint.
According to the expression \eqref{eq:abstract-SAV-gf-short}, we can construct higher order schemes via Runge-Kutta methods.
In the present paper, we present a fourth order SAV scheme.
General theory to construct SAV Runge-Kutta schemes will be presented elsewhere.

Let us consider a two-stage Runge-Kutta method for \eqref{eq:abstract-SAV-gf-short} as follows:
\begin{equation}
    \left\{
    \begin{aligned}
        Z^n_i &= z^n + \Dt \sum_{j=1}^2 a_{ij} \cL \paren*{U^n_j} \nabla \tE \paren*{Z^n_j}, \quad i=1,2,\\
        z^{n+1} &= z^n + \Dt \sum_{j=1}^2 b_j  \cL \paren*{U^n_j} \nabla \tE \paren*{Z^n_j},
    \end{aligned}
    \right.
    \quad 
    Z^n_j = \begin{bmatrix} U^n_j \\ R^n_{\rL,j} \\ R^n_{\rU,j} \end{bmatrix},
    \label{eq:RK}
\end{equation}
where $a_{i,j} \in \RR$ and $b_j \in \RR$.
The Runge-Kutta method \eqref{eq:RK} is called canonical if 
\begin{equation}
    b_i b_j = b_i a_{ij} + b_j a_{ji},
    \label{eq:cannonical-RK}
\end{equation}
and it is known that canonical Runge-Kutta method preserves the energy functional $\tE$ if it is quadratic (cf.~\cite[Theorem IV.2.2]{HLW2010}).

The typical example is the Runge-Kutta-Gauss-Legendre method with the Butcher tableau
\begin{equation}
    \begin{array}{c|c}
        c & A \\\hline 
          & b 
    \end{array}
    \quad = \quad 
    \begin{array}{c|cc}
        \frac{1}{2} - \frac{\sqrt{3}}{6} & \frac{1}{4} & \frac{1}{4} - \frac{\sqrt{3}}{6} \\[1ex]
        \frac{1}{2} + \frac{\sqrt{3}}{6} & \frac{1}{4} + \frac{\sqrt{3}}{6} & \frac{1}{4} \\[1ex]\hline 
                                         & \frac{1}{2} & \frac{1}{2} 
    \end{array},
\end{equation}
where $A=(a_{ij})$ and $b = (b_i)$.
Here, $c = (c_j)$ is unused because \eqref{eq:abstract-SAV-gf-short} is autonomous.
Although this method is fourth order, it is implicit and thus a nonlinear scheme.
In order to construct a linear scheme, we replace $U^n_j$ in $\cL$ by other explicitly determined vector $\bar{U}^n_j$ as follows:
\begin{equation}
    \left\{
    \begin{aligned}
        Z^n_i &= z^n + \Dt \sum_{j=1}^2 a_{ij} \cL \paren*{\bar{U}^n_j} \nabla \tE \paren*{Z^n_j}, \qquad i=1,2,\\
        z^{n+1} &= z^n + \Dt \sum_{j=1}^2 b_j  \cL \paren*{\bar{U}^n_j} \nabla \tE \paren*{Z^n_j}.
    \end{aligned}
    \right.
    \label{eq:modified-RK}
\end{equation}
Since $\tE$ is quadratic and the Runge-Kutta method is canonical, the modified method \eqref{eq:modified-RK} also preserves the energy functional $\tE^n = \tE\paren*{z^n}$.
\begin{lemma}\label{lem:sav-rk}
    Let $z^n$ be the solution of the \eqref{eq:modified-RK}.
    If $a_{ij}$ and $b_i$ satisfy the relation \eqref{eq:cannonical-RK}, then $\tE^{n+1} = \tE^n$ regardless of the choice of $\bar{U}^n_j$.
\end{lemma}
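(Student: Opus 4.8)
The plan is to adapt the classical argument that canonical Runge--Kutta methods preserve quadratic invariants (\cite[Theorem IV.2.2]{HLW2010}) to the present lagged setting, the only novelty being that the coefficient operator $\cL$ is evaluated at the frozen arguments $\bar{U}^n_j$ rather than at the stages themselves. The starting observation is that $\tE$ is a quadratic form: writing $S$ for the self-adjoint operator on $Z$ with $\nabla\tE(z) = Sz$ (explicitly $S = \mathrm{diag}(L, 2, -2)$ acting on $V \times \RR \times \RR$, self-adjoint since $L$ is), we have $\tE(z) = \frac{1}{2}[z, Sz]_Z$. It is convenient to abbreviate the stage increments by $k_j \coloneqq \cL(\bar{U}^n_j)\nabla\tE(Z^n_j)$, so that \eqref{eq:modified-RK} reads $Z^n_i = z^n + \Dt\sum_j a_{ij}k_j$ and $z^{n+1} = z^n + \Dt\sum_j b_j k_j$.

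First I would expand $\tE(z^{n+1}) - \tE(z^n)$ by substituting the update formula into the quadratic form and using bilinearity together with the self-adjointness of $S$; this produces a linear term $\Dt\sum_i b_i [k_i, Sz^n]_Z$ and a quadratic term $\frac{\Dt^2}{2}\sum_{i,j}b_i b_j[k_i, Sk_j]_Z$. Next I would use the stage equations to write $z^n = Z^n_i - \Dt\sum_j a_{ij}k_j$ and insert this into $[k_i, Sz^n]_Z$.

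The decisive step is that $[k_i, SZ^n_i]_Z = [\cL(\bar{U}^n_i)\nabla\tE(Z^n_i), \nabla\tE(Z^n_i)]_Z = 0$, because $\cL(\bar{U}^n_i)$ is skew-symmetric by \cref{lem:operator-sav}. This is precisely where the argument becomes insensitive to the choice of $\bar{U}^n_j$: skew-symmetry of $\cL$ holds for \emph{every} frozen argument, so the lag does not spoil the cancellation. Consequently $[k_i, Sz^n]_Z = -\Dt\sum_j a_{ij}[k_i, Sk_j]_Z$.

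Finally, I would substitute this back and collect the two $O(\Dt^2)$ double sums. Since $S$ is self-adjoint, the quantities $[k_i, Sk_j]_Z$ are symmetric in $i$ and $j$, so the term $\sum_{i,j}b_i a_{ij}[k_i, Sk_j]_Z$ can be symmetrized to $\frac{1}{2}\sum_{i,j}(b_i a_{ij}+b_j a_{ji})[k_i, Sk_j]_Z$; invoking the canonical condition \eqref{eq:cannonical-RK} replaces $b_i a_{ij}+b_j a_{ji}$ by $b_i b_j$, yielding $\frac{1}{2}\sum_{i,j}b_i b_j[k_i, Sk_j]_Z$, which cancels the remaining quadratic term exactly and gives $\tE^{n+1} = \tE^n$. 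I do not anticipate a genuine obstacle; the only point needing care is confirming that the classical proof survives the frozen-coefficient modification, and the foregoing shows that it does, precisely because the skew-symmetry provided by \cref{lem:operator-sav} is uniform in the argument of $\cL$.
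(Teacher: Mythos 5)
Your proof is correct and is essentially the paper's own argument: the paper's proof simply cites \cite[Theorem~IV.2.2]{HLW2010}, and what you have written out is exactly that classical quadratic-invariant computation, together with the (correct) observation that the cancellation $[k_i, SZ^n_i]_Z = 0$ relies only on the skew-symmetry of $\cL(\bar{U}^n_i)$, which \cref{lem:operator-sav} guarantees uniformly in the frozen argument. No gap; your write-up just makes explicit the adaptation the paper leaves implicit.
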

\begin{proof}
The proof is the same as in \cite[Theorem IV.2.2]{HLW2010}.
\end{proof}

Now, it is necessary to find the predetermined vectors $\bar{U}^n_j$.
Since $U^n_j$ in \eqref{eq:RK} is locally $O(\Dt^3)$-approximation of $u(t_n + c_j \Dt)$, 
it is required that $\bar{U}^n_j$ approximates the same vector with the same order.
To construct such vectors, we consider two methods.

The one is to use another explicit five-stage Runge-Kutta method for the original equation \eqref{eq:abstract-gf}, which is deduced from the framework of partitioned Runge-Kutta methods.
That is, define $\bar{Y}^n_j$ by
\begin{equation}
    \bar{Y}^n_i = u^n + \Dt \sum_{j=1}^5 \bar{a}_{ij} \Skew \nabla E\paren*{\bar{Y}^n_j},
    \label{eq:auxiliary-RK}
\end{equation}
where
\begin{equation}
    \paren{\bar{a}_{ij}} = 
    \begin{bmatrix}
        0 & 0 & 0 & 0 & 0 \\[1ex]
        \frac{1}{4} & 0 & 0 & 0 & 0 \\[1ex]
        0 & \frac{1}{2} & 0 & 0 & 0 \\[1ex]
        \frac{1}{6} & 0 & \frac{1}{3} - \frac{\sqrt{3}}{6} & 0 & 0 \\[1ex]
        \frac{1}{6} & 0 & \frac{1}{3} + \frac{\sqrt{3}}{6} & 0 & 0 
    \end{bmatrix}.
    \label{eq:auxiliary-RK-coeff}
\end{equation}
Then, $\bar{U}^n_1 \coloneqq \bar{Y}^n_4$ and $\bar{U}^n_2 \coloneqq \bar{Y}^n_5$ are expected to fulfill the requirements. This can be proved in a general framework, which will be presented elsewhere.

The second choice is to utilize the three-stage exponential Runge-Kutta method.
This strategy is efficient for the PDE case.
Assume that the original equation \eqref{eq:abstract-gf} can be written as
\begin{equation}
    \dot{u}(t) = Au + g(u)
\end{equation}
where $A$ is a linear operator on $V$ and $g$ is some nonlinear function.
Then, an exponential Runge-Kutta method for \eqref{eq:abstract-gf} is described as follows:
\begin{equation}
    \left\{
    \begin{aligned}
        U^{n+1}_i &= u^n + \Delta t \sum_{j=1}^3 \tilde{a}_{ij}(\Delta t A) \paren*{g(U^{n+1}_j) + A u^n}, \qquad i=1,2,3, \\
        u^{n+1} &= u^n + \Delta t \sum_{j=1}^s \tilde{b}_j(\Delta t A) \paren*{g(U^{n+1}_j) + A u^n} ,
    \end{aligned}
    \right.
\end{equation}
where $\tilde{a}_{ij}$ and $\tilde{b}_j$ are functions defined as follows:
\begin{equation}
    \begin{array}{c|c}
        \tilde{c} & \tilde{A} \\\hline & \tilde{b}
    \end{array}
    = 
    \begin{array}{c|ccc}
        0 & 0 & 0 & 0 \\[1ex]
        \frac{1}{3} & \frac{1}{3} \varphi_{1,3} & 0 & 0 \\[1ex]
        \frac{2}{3} & \frac{2}{3} \varphi_{1,3} - \frac{4}{3} \varphi_{2,3} & \frac{4}{3} \varphi_{2,3} & 0 \\[1ex]\hline
        & \varphi_1 - \frac{3}{2} \varphi_2 & 0 & \frac{3}{2} \varphi_2
    \end{array}
    \label{eq:exp-RK-coeff}
\end{equation}
with
\begin{equation}
    \varphi_1(z) = \frac{e^z-1}{z} ,
    \quad 
    \varphi_2(z) = \frac{e^z-1-z}{z^2},
    \quad
    \varphi_{i,j}(z) = \varphi_i(\tilde{c}_j z).
\end{equation}
Let us write this integrator by $\operatorname{ERK}(\Delta t)$. 
Then, $\bar{U}^n_j \coloneqq \operatorname{ERK}(c_j \Delta t) u^n$ ($j=1,2$) is locally $O(\Delta t^3)$-approximation of $u(t_n + c_j \Delta t)$, which is what we desired.

Conclusively, we obtain the fourth order SAV-Runge-Kutta scheme as follows.
\begin{scheme}\label{scheme:sav-rk}
For given $z^n \in Z$, find $z^{n+1} \in Z$ that satisfies \eqref{eq:modified-RK},
where $\bar{U}^n_j$ is determined by the above methods.
\end{scheme}

\section{Numerical Experiments}
\label{sec:ne}

\subsection{Kepler problem}

We first consider, as a toy problem, the Kepler ODE model
\begin{equation}
    \frac{d}{dt} 
    \begin{bmatrix}
        x \\ y \\ u \\ v
    \end{bmatrix}
    =
    \begin{bmatrix}
        u \\ v \\ -x/r^3 \\ -y/r^3
    \end{bmatrix},
    \label{eq:kepler}
\end{equation}
where $r = \sqrt{x^2+y^2}$.
The corresponding energy functional is
\begin{equation}
    E(x,y,u,v) = \frac{u^2+v^2}{2} - \frac{1}{r}
\end{equation}
and we can rewrite equation \eqref{eq:kepler} to the form \eqref{eq:abstract-gf} on $V = \mathbb{R}^4$ with
\begin{equation}
    D = \begin{bmatrix}
        O_2 & I_2 \\ -I_2 & O_2
    \end{bmatrix},
\end{equation}
where $I_2$ and $O_2$ are the identity and zero matrices in $\mathbb{R}^{2 \times 2}$, respectively.
Let $w = [x,y,u,v]^T \in V$. 
We decompose $E$ as
\begin{equation}
    E(w) = \frac{|w|^2}{2} + E_\rL(w) - E_\rU(w),
    \qquad
    E_\rL(w) = 0,
    \quad
    E_\rU(w) = \frac{1}{r},
\end{equation}
which corresponds to \cref{assumption} with $L=I \in \mathbb{R}^{4 \times 4}$.

We then consider three schemes: 
the Crank-Nicolson-type scheme (\cref{scheme:sav-cn}) with $\bu^{n+1/2}$ determined by the extrapolation (SAV-CN-ext) and the forward Euler method with time increment $\Dt/2$ (SAV-CN-Euler);
the fourth order SAV-Runge-Kutta scheme (\cref{scheme:sav-rk}) with $\bar{U}^n_j$ determined by the explicit five-stage Runge-Kutta method with coefficients \eqref{eq:auxiliary-RK-coeff} (SAV-RK).
Throughout the experiments, we set $w(0) = [0.2,0,0,0.3]^T$, which implies the solution is periodic with period $T=2\pi$, and $a_\rU = 1$.

\subsubsection{Numerical results}

We first observe that the proposed schemes work well.
We set $\Dt = T/2^{10}$ and computed $N=10 \cdot 2^{10}$ steps (ten periods) for each scheme.
The results are plotted in \cref{fig:kepler-SAV-CN-ext,fig:kepler-SAV-CN-Euler,fig:kepler-SAV-RK}.
For each result, the left figure shows the orbit until $t=10T = 20\pi$, where small circles expresses the solution at $t=nT$ for $n=0,1,\dots,10$, and the right one shows the relative error of the modified energy functional $\tilde{E}^n$.

Figures \ref{fig:kepler-SAV-CN-ext}\subref{subfig:kepler-SAV-CN-ext-orbit} and \ref{fig:kepler-SAV-CN-Euler}\subref{subfig:kepler-SAV-CN-Euler-orbit} show the orbit of numerical solutions of the second order Crank-Nicolson-type schemes.
This result suggests that the choice of the predetermined vector $\bar{u}^n$ in \eqref{eq:sav-cn} may affects the accuracy.
\cref{fig:kepler-SAV-RK}\subref{subfig:kepler-SAV-RK-orbit} shows the orbit of numerical solutions of the fourth order scheme and the result suggests that this scheme has good accuracy.
Furthermore, Figures \ref{fig:kepler-SAV-CN-ext}\subref{subfig:kepler-SAV-CN-ext-mHamil}--\ref{fig:kepler-SAV-RK}\subref{subfig:kepler-SAV-RK-mHamil} show that the modified energy functional is numerically conserved, which support \cref{lem:sav-cn,lem:sav-rk}.

\begin{figure}
    \centering
    \begin{subfigure}{0.45\linewidth}
        \centering
        \includegraphics[page=3,width=\linewidth]{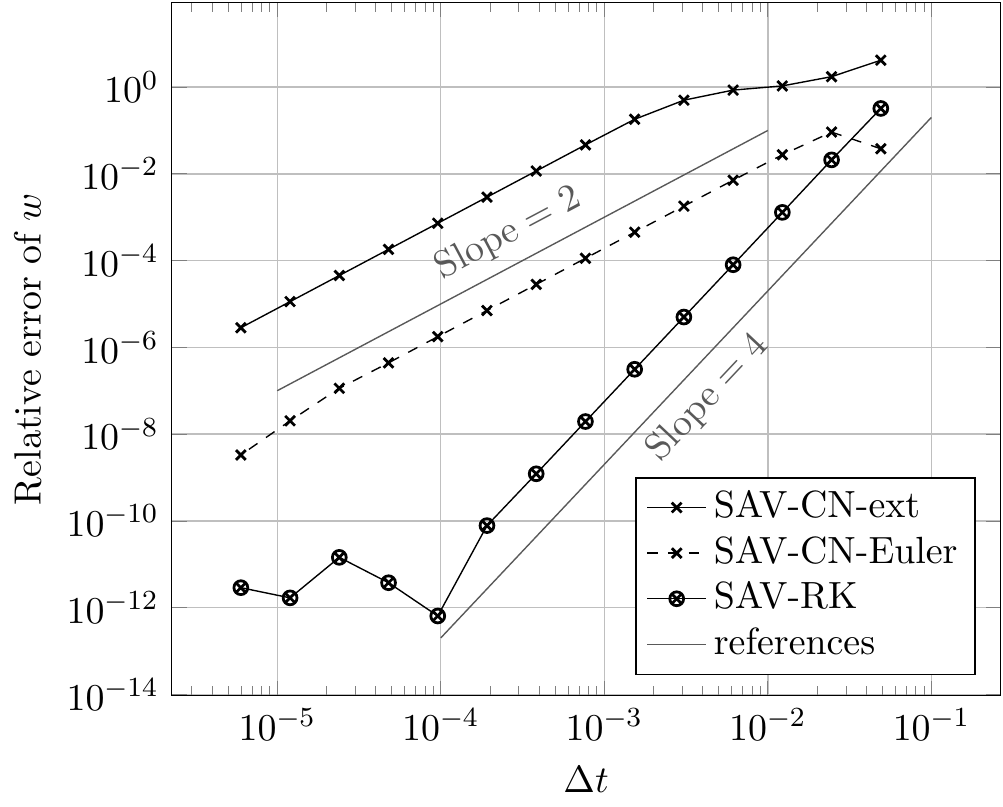}
        \caption{Orbit}
        \label{subfig:kepler-SAV-CN-ext-orbit}
    \end{subfigure}
    \hfill
    \begin{subfigure}{0.45\linewidth}
        \centering
        \includegraphics[page=6,width=\linewidth]{figures-kepler.pdf}
        \caption{Modified energy functional}
        \label{subfig:kepler-SAV-CN-ext-mHamil}
    \end{subfigure}
    \caption{Numerical results for \eqref{eq:kepler} by SAV-CN-ext}
    \label{fig:kepler-SAV-CN-ext}
\end{figure}

\begin{figure}
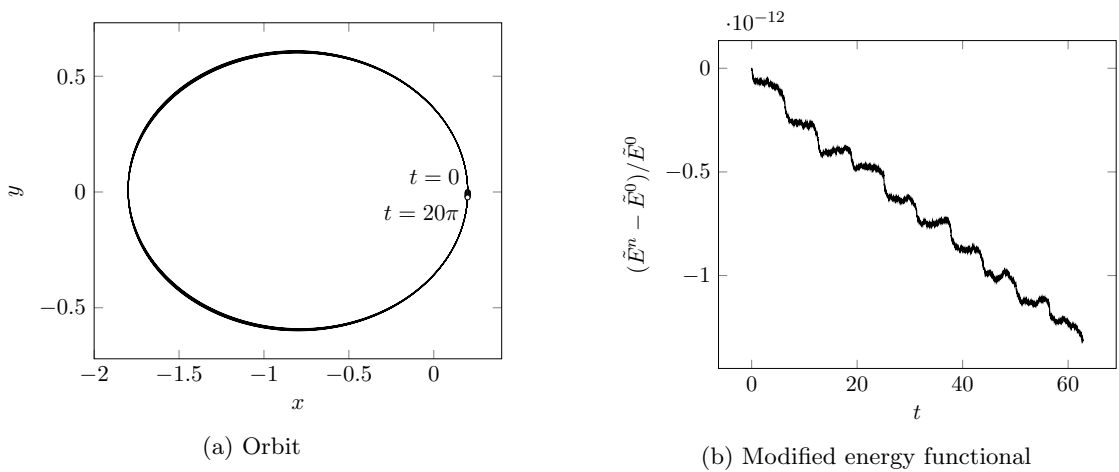

    \centering
    \begin{subfigure}{0.45\linewidth}
        \centering
        \includegraphics[page=4,width=\linewidth]{figures-kepler.pdf}
        \caption{Orbit}
        \label{subfig:kepler-SAV-CN-Euler-orbit}
    \end{subfigure}
    \hfill
    \begin{subfigure}{0.45\linewidth}
        \centering
        \includegraphics[page=7,width=\linewidth]{figures-kepler.pdf}
        \caption{Modified energy functional}
        \label{subfig:kepler-SAV-CN-Euler-mHamil}
    \end{subfigure}
    \caption{Numerical results for \eqref{eq:kepler} by SAV-CN-Euler}
    \label{fig:kepler-SAV-CN-Euler}
\end{figure}

\begin{figure}
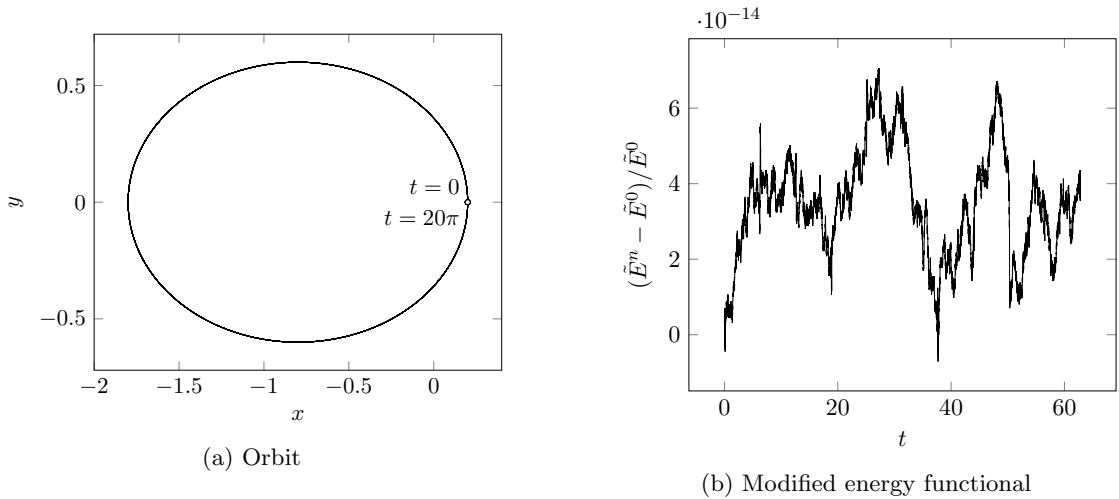

    \centering
    \begin{subfigure}{0.45\linewidth}
        \centering
        \includegraphics[page=5,width=\linewidth]{figures-kepler.pdf}
        \caption{Orbit}
        \label{subfig:kepler-SAV-RK-orbit}
    \end{subfigure}
    \hfill
    \begin{subfigure}{0.45\linewidth}
        \centering
        \includegraphics[page=8,width=\linewidth]{figures-kepler.pdf}
        \caption{Modified energy functional}
        \label{subfig:kepler-SAV-RK-mHamil}
    \end{subfigure}
    \caption{Numerical results for \eqref{eq:kepler} by SAV-RK}
    \label{fig:kepler-SAV-RK}
\end{figure}

\subsubsection{Convergence rates}

We now consider the convergence rates numerically.
We set $\Dt = 2\pi/N_t$ for $N_t=2^i$, $i=7,8,\dots,20$ and computed the relative error
\begin{equation}
    \frac{\| w^{N_t} - w^0 \|_{l^\infty}}{\| w^0 \|_{l^\infty}}
\end{equation}
for the solution $w^n$ of each scheme.
We plotted the results in \cref{fig:kepler-rate}\subref{subfig:kepler-rate-sol}.
We also computed the original energy functional $E^n = E(w^n)$ and plotted the relative error
\begin{equation}
    \max_{0 \le n \le N_t} \frac{|E^n - E^0|}{|E^0|}
\end{equation}
in \cref{fig:kepler-rate}\subref{subfig:kepler-rate-Hamil}.
These results suggest that, for both the solution and the energy functional, the convergence rate is $O(\Dt^2)$ for the second order schemes and $O(\Dt^4)$ for the fourth order scheme, as expected.
Moreover, one can see that the error of $w$ of SAV-CN-ext is about $10^3$ times bigger than that of SAV-CN-Euler, which reflects the situation observed in Figures \ref{fig:kepler-SAV-CN-ext}\subref{subfig:kepler-SAV-CN-ext-orbit} and \ref{fig:kepler-SAV-CN-Euler}\subref{subfig:kepler-SAV-CN-Euler-orbit}.

\begin{figure}\centering
    \begin{subfigure}{0.45\linewidth}
        \centering
        \includegraphics[page=1,width=\linewidth]{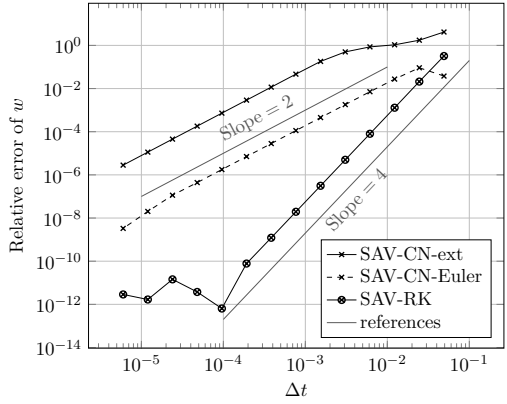}
        \caption{Relative error of the solution.}
        \label{subfig:kepler-rate-sol}
    \end{subfigure}
    \hfill
    \begin{subfigure}{0.45\linewidth}
        \centering
        \includegraphics[page=2,width=\linewidth]{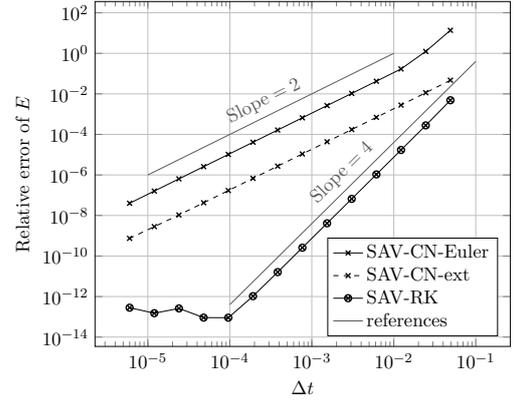}
        \caption{Relative error of the energy functional.}
        \label{subfig:kepler-rate-Hamil}
    \end{subfigure}
    \caption{Convergence rates of the proposed schemes for the Kepler problem \eqref{eq:kepler}. The gray-colored thin lines are drawn for references}
    \label{fig:kepler-rate}
\end{figure}

\subsection{Korteveg--de Vries equation}

We next consider the Korteveg--de Vries (KdV) equation 
\begin{equation}
    \frac{\partial u}{\partial t} = \frac{\partial}{\partial x} \paren*{ - \frac{\partial^2 u}{\partial x^2} - 3u^2}
    \qquad \text{in } (0,L) \times (0,T)
    \label{eq:kdv}
\end{equation}
with the periodic boundary condition.
The corresponding energy functional is
\begin{equation}
    E[u] = \int_0^L \paren*{ \frac{u_x^2}{2} - u^3} dx.
\end{equation}
We first discretize the equation spatially by the spectral difference method \cite{MSFM2002}.
Namely, for $N \in 2\mathbb{N}$, we set $\Delta x = L/N$, $x_j = j \Delta x$, and $u_j(t) \approx u(x_j,t)$, and define the operator $\delta \in \mathbb{R}^{N \times N}$ by  
\begin{equation}
    \delta u \coloneqq F^{-1} \Xi F u, \qquad u \in \mathbb{R}^N,
\end{equation}
where 
\begin{equation}
    F = \paren*{e^{-2\pi ijk/N}}_{0 \le j,k \le N-1} \in \mathbb{C}^{N \times N}
\end{equation}
is the discrete Fourier transform and 
\begin{equation}
    \Xi = \operatorname{diag}\paren*{0, 1, 2, \dots, \frac{N}{2} - 1, 0, - \paren*{\frac{N}{2}-1}, \dots, -2, -1}\frac{2 \pi i}{L}
    \in \mathbb{C}^{N \times N}.
\end{equation}
Although $F$ and $\Xi$ are complex matrices, it is easy to see that $\delta$ is a real matrix.
Then, we discretize the KdV equation \eqref{eq:kdv} spatially as
\begin{equation}
    u_t = \delta \paren*{ -\delta^2 u - 3u \odot u },
    \label{eq:disc-kdv}
\end{equation}
where $u = (u_j)_{j=0}^{N-1} \in \mathbb{R}^N$ and $\odot$ is the Hadamard product.
Then, the functional
\begin{equation}
    E[u] = \sum_{j=0}^{N-1} \paren*{\frac{1}{2}(\delta u)_j^2 - u_j^3} \Delta x
    \label{eq:disc-Hamil}
\end{equation}
is conserved.

Now, let us define $g_L$ and $g_U$ by
\begin{equation}
    g_L(u) = u^4 - u^3, \quad g_U(u) = u^4 \qquad (u \in \mathbb{R})
\end{equation}
and introduce the functionals
\begin{equation}
    E_\rX[u] \coloneqq \sum_{j=0}^{N-1} g_X(u_j) \Delta x \qquad \rX=\rL,\rU 
\end{equation}
for $u \in \mathbb{R}^N$. 
Then, we have the decomposition
\begin{equation}
    E[u] = \frac{1}{2} u^T (-\delta^2 u) \Delta x + E_\rL[u] - E_\rU[u],
\end{equation}
which satisfies \cref{assumption}, and thus, for $D=\delta$, $L=-\delta^2$, and $V = \mathbb{R}^{N \times N}$ with the inner product $\inpr{u}{v} \coloneqq u^T v \Delta x$, the spatially discretized KdV equation \eqref{eq:disc-kdv} is expressed by the gradient system \eqref{eq:abstract-gf}.
Hence we obtain the system \eqref{eq:abstract-SAV-gf-short} with some parameters $a_\rL$ and $a_\rU$, and we can apply our schemes that conserves the modified energy functional
\begin{equation}
    \tE^n = \frac{1}{2} \inpr{u^n}{-\delta^2 u^n} + \paren*{r_\rL^n}^2 - \paren*{r_\rU^n}^2.
    \label{eq:kdv-mHamil}
\end{equation}

We here consider three schemes: 
the Crank-Nicolson-type scheme (\cref{scheme:sav-cn}) with $\bu^{n+1/2}$ determined by the extrapolation (SAV-CN-ext) and the exponential Euler method with time increment $\Dt/2$ (SAV-CN-Euler);
the fourth order SAV-Runge-Kutta scheme (\cref{scheme:sav-rk}) with $\bar{U}^n_j$ determined by the three-stage exponential Runge-Kutta method with coefficients \eqref{eq:exp-RK-coeff} (SAV-RK).

Throughout the experiments, we set the initial function by
\begin{equation}
u(x,0) = u_0 + 2 \kappa^2 k^2 \operatorname{cn}^2\paren*{\kappa x | k}    
\end{equation}
for $k=\sqrt{0.1}$, $\kappa=1$, and $u_0 = 0$ so that the solution is the cnoidal wave
\begin{equation}
    u(x,t) = u_0 + 2 \kappa^2 k^2 \operatorname{cn}^2\paren*{\kappa(x-ct) | k},
    \qquad
    c = 6u_0 + 4(2k^2-1)\kappa^2,
    \label{eq:cnoidal}
\end{equation}
which is periodic in space with $p = 2K(k)/\kappa \approx 3.2249$ and in time with period $T = p/|c| \approx 1.0078$.
Here, $\operatorname{cn}$ is one of the Jacobi elliptic functions and $K(k)$ is the complete elliptic integral of the first kind.
We also set the length of the spatial interval by $L=p$, the frequency of the spectral difference method by $N=16$, and the parameters for the auxiliary variables by $a_\rL = a_\rU = 1$.

\subsubsection{Conservation law}

We first observe that the proposed schemes conserves the modified energy functional $\tilde{E}^n$.
We set $\Dt = T/2^{10}$ and computed one period for each scheme.
We plotted the relative error of the modified energy functional \eqref{eq:kdv-mHamil}.
The results are plotted in \cref{fig:kdv-SAV-mHamil} and one can observe that \eqref{eq:kdv-mHamil} is conserved numerically, which support \cref{lem:sav-cn,lem:sav-rk}.

\begin{figure}
    \centering
    \begin{subfigure}{0.32\linewidth}
        \centering
        \includegraphics[page=3,width=\linewidth]{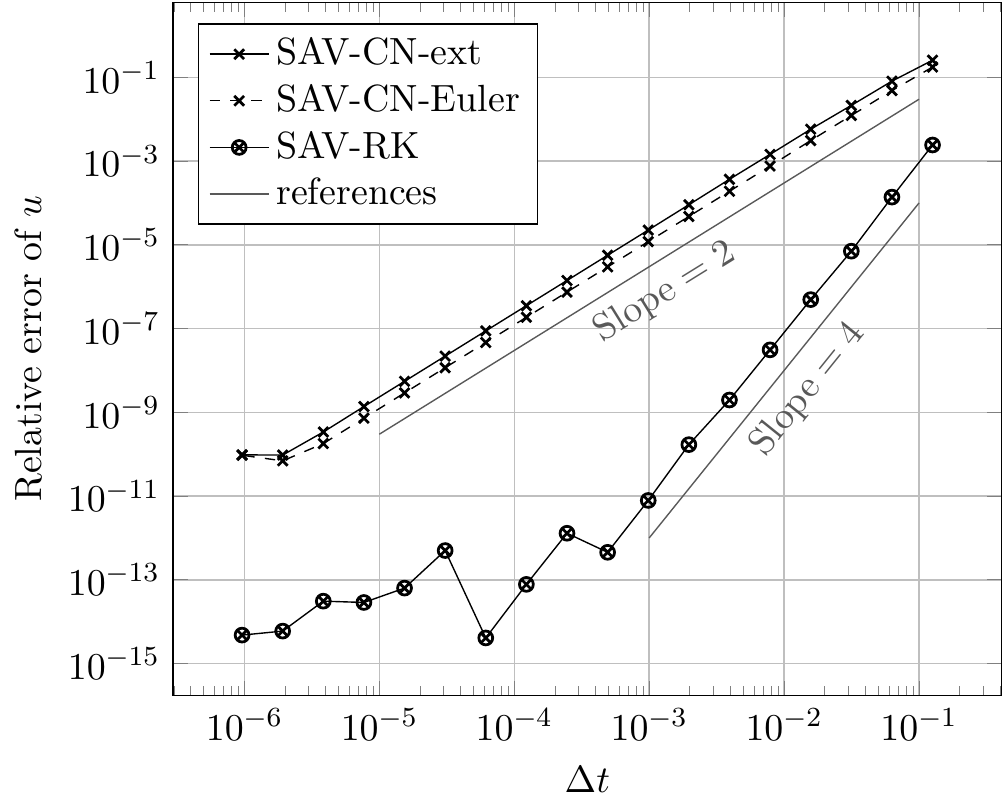}
        \caption{SAV-CN-ext}
    \end{subfigure}
    \hfill
    \begin{subfigure}{0.32\linewidth}
        \centering
        \includegraphics[page=4,width=\linewidth]{figures-kdv.pdf}
        \caption{SAV-CN-Euler}
    \end{subfigure}
    \hfill
    \begin{subfigure}{0.32\linewidth}
        \centering
        \includegraphics[page=5,width=\linewidth]{figures-kdv.pdf}
        \caption{SAV-CN-RK}
    \end{subfigure}
    \caption{Relative error of the modified energy functional for the spatially discretized KdV equation \eqref{eq:disc-Hamil}}
    \label{fig:kdv-SAV-mHamil}
\end{figure}

\subsubsection{Convergence rate}

We observe the convergence rate numerically.
We set $\Dt = T/N_t$ for $N_t=2^i$, $i=3,4,\dots,20$ and computed the relative error
\begin{equation}
    \frac{\| u^{N_t} - u^0 \|_{l^\infty}}{\| u^0 \|_{l^\infty}}
\end{equation}
for the solution $u^n$ of each scheme.
We plotted the results in \cref{fig:kdv-rate}\subref{subfig:kdv-rate-sol}.
This result suggests that the convergence rate for the solution is $O(\Dt^2)$ for the second order schemes and $O(\Dt^4)$ for the fourth order scheme, as expected.

We also computed the spatially discretized original energy functional  $E^n = E(u^n)$ defined by \eqref{eq:disc-mod-Hamil} and plotted the relative error
\begin{equation}
    \max_{0 \le n \le N_t} \frac{|E^n - E^0|}{|E^0|}
\end{equation}
in \cref{fig:kdv-rate}\subref{subfig:kdv-rate-Hamil}.
In contrast to the case of $u$, the convergence rates of $E^n$ for SAV-Euler and SAV-RK are better than expected.
Although it is not clear why these phenomena occur, we can expect that the upper bound for convergence rate is $O(\Dt^2)$ or $O(\Dt^4)$ for each scheme.

\begin{figure}\centering
    \begin{subfigure}{0.45\linewidth}
        \centering
        \includegraphics[page=1,width=\linewidth]{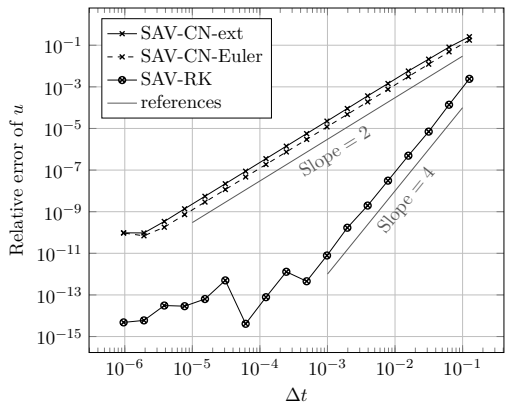}
        \caption{Relative error of the solution.}
        \label{subfig:kdv-rate-sol}
    \end{subfigure}
    \hfill
    \begin{subfigure}{0.45\linewidth}
        \centering
        \includegraphics[page=2,width=\linewidth]{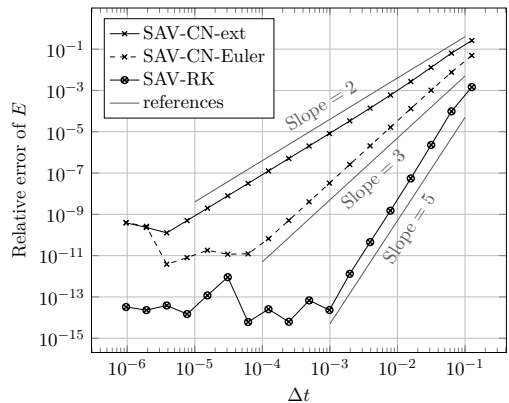}
        \caption{Relative error of the energy functional.}
        \label{subfig:kdv-rate-Hamil}
    \end{subfigure}
    \caption{Convergence rates of the proposed schemes for the KdV equation \eqref{eq:kdv}. The gray-colored thin lines are drawn for references}
    \label{fig:kdv-rate}
\end{figure}

\section{Concluding Remarks}
\label{sec:cr}
In this paper, we considered the SAV approach for general gradient flow \eqref{eq:abstract-gf} that has possibly lower-unbounded energy or energy functional.
We decomposed the energy functional into three parts as in \eqref{eq:assumption} and introduced two auxiliary variables.
Then, we obtained the gradient system expression of the SAV approach \eqref{eq:abstract-SAV-gf-short}, which inherits the structure of the original equation.
According to this expression, we proposed the second and fourth order SAV schemes.
In particular, the fourth order scheme is based on the canonical Runge-Kutta method and thus the novel expression \eqref{eq:abstract-SAV-gf-short} plays an essential role.
We finally presented some numerical experiments that support the theoretical results.

However, we do not mention the well-posedness of the scheme even for the second order scheme.
Indeed, it is not clear that the linear equation \eqref{eq:sav-cn-forward}, which appears in the procedure of the block Gauss elimination, is solvable.
Furthermore, for the fourth order scheme, it is not clear that the equation for the solution $u$ and the auxiliary variables $r_\rX$ ($\rX = \rL,\rU$) can be separated
unlike the second order schemes, for which the separation is possible by the block Gauss elimination.
Finally, we should discuss convergence rate of the schemes theoretically. 
We leave these studies for future work.

\section*{Acknowledgments}
The first author was supported by JSPS Grant-in-Aid for Early-Career Scientists (No.~19K14590).
The second author was supported by JSPS Grant-in-Aid for Research Activity Start-up (No.~19K23399).

\bibliographystyle{plain}
\bibliography{reference}

\begin{thebibliography}{10}

\bibitem{ALL2019SISC}
Georgios Akrivis, Buyang Li, and Dongfang Li.
\newblock Energy-{D}ecaying {E}xtrapolated {RK}--{SAV} {M}ethods for the
  {A}llen--{C}ahn and {C}ahn--{H}illiard {E}quations.
\newblock {\em SIAM J. Sci. Comput.}, 41(6):A3703--A3727, 2019.

\bibitem{AC1979}
Samuel~M. Allen and John~W. Cahn.
\newblock A microscopic theory for antiphase boundary motion and its
  application to antiphase domain coarsening.
\newblock {\em Acta Metallurgica}, 27(6):1085--1095, 1979.

\bibitem{B2004}
Christophe Besse.
\newblock A relaxation scheme for the nonlinear {S}chr\"{o}dinger equation.
\newblock {\em SIAM J. Numer. Anal.}, 42(3):934--952, 2004.

\bibitem{bo2020arbitrary}
Yonghui Bo, Yushun Wang, and Wenjun Cai.
\newblock {Arbitrary high-order linearly implicit energy-preserving algorithms
  for Hamiltonian PDEs}.
\newblock {\em arXiv}, 2011.08375, 2020.

\bibitem{CH1958}
J.~W. Cahn and J.~E. Hilliard.
\newblock Free energy of a non-uniform system. {I}. interfacial free energy.
\newblock {\em J. Chem. Phys.}, 28:258--267, 1958.

\bibitem{CJWS2019JCP}
Wenjun Cai, Chaolong Jiang, Yushun Wang, and Yongzhong Song.
\newblock Structure-preserving algorithms for the two-dimensional sine-{G}ordon
  equation with {N}eumann boundary conditions.
\newblock {\em J. Comput. Phys.}, 395:166--185, 2019.

\bibitem{CH1993}
Roberto Camassa and Darryl~D. Holm.
\newblock An integrable shallow water equation with peaked solitons.
\newblock {\em Phys. Rev. Lett.}, 71(11):1661--1664, 1993.

\bibitem{CGMMOOQ2012}
E.~Celledoni, V.~Grimm, R.~I. McLachlan, D.~I. McLaren, D.~O'Neale, B.~Owren,
  and G.~R.~W. Quispel.
\newblock Preserving energy resp. dissipation in numerical {PDE}s using the
  ``average vector field'' method.
\newblock {\em J. Comput. Phys.}, 231(20):6770--6789, 2012.

\bibitem{cheng2020generalized}
Qing Cheng.
\newblock {The generalized scalar auxiliary variable approach (G-SAV) for
  gradient flows}.
\newblock {\em arXiv}, 2002.00236, 2020.

\bibitem{CS2018SISC}
Qing Cheng and Jie Shen.
\newblock Multiple scalar auxiliary variable ({MSAV}) approach and its
  application to the phase-field vesicle membrane model.
\newblock {\em SIAM J. Sci. Comput.}, 40(6):A3982--A4006, 2018.

\bibitem{DO2011}
M.~Dahlby and B.~Owren.
\newblock A general framework for deriving integral preserving numerical
  methods for {PDE}s.
\newblock {\em SIAM J. Sci. Comput.}, 33(5):2318--2340, 2011.

\bibitem{FCW2019arx}
Yayun Fu, Wenjun Cai, and Yushun Wang.
\newblock A structure-preserving algorithm for the fractional nonlinear
  {S}chr{\"o}dinger equation based on the {SAV} approach.
\newblock {\em arXiv}, 1911.07379, 2019.

\bibitem{F1999}
D.~Furihata.
\newblock Finite difference schemes for {$\partial u/\partial
  t=(\partial/\partial x)^\alpha\delta G/\delta u$} that inherit energy
  conservation or dissipation property.
\newblock {\em J. Comput. Phys.}, 156(1):181--205, 1999.

\bibitem{FM2011}
D.~Furihata and T.~Matsuo.
\newblock {\em Discrete variational derivative method--A structure-preserving
  numerical method for partial differential equations}.
\newblock CRC Press, Boca Raton, 2011.

\bibitem{FM1998}
D.~Furihata and M.~Mori.
\newblock General derivation of finite difference schemes by means of a
  discrete variation (in {J}apanese).
\newblock {\em Trans. Japan Soc. Indust. Appl.}, 8(3):317--340, 1998.

\bibitem{GZW2019CPC}
Yuezheng Gong, Jia Zhao, and Qi~Wang.
\newblock Arbitrarily high-order unconditionally energy stable {SAV} schemes
  for gradient flow models.
\newblock {\em Comput. Phys. Commun.}, page 107033, 2019.

\bibitem{MR4116799}
Yuezheng Gong, Jia Zhao, and Qi~Wang.
\newblock Arbitrarily high-order linear energy stable schemes for gradient flow
  models.
\newblock {\em J. Comput. Phys.}, 419:109610, 20, 2020.

\bibitem{G1996}
O.~Gonzalez.
\newblock Time integration and discrete {H}amiltonian systems.
\newblock {\em J. Nonlinear Sci.}, 6(5):449--467, 1996.

\bibitem{HLW2010}
E.~Hairer, C.~Lubich, and G.~Wanner.
\newblock {\em Geometric numerical integration, Structure-preserving algorithms
  for ordinary differential equations}, volume~31 of {\em Springer Series in
  Computational Mathematics}.
\newblock Springer, Heidelberg, 2010.

\bibitem{HBYT2017SC}
Daozhi Han, Alex Brylev, Xiaofeng Yang, and Zhijun Tan.
\newblock Numerical analysis of second order, fully discrete energy stable
  schemes for phase field models of two-phase incompressible flows.
\newblock {\em J. Sci. Comput.}, 70(3):965--989, 2017.

\bibitem{MR4083210}
Chaolong Jiang, Yuezheng Gong, Wenjun Cai, and Yushun Wang.
\newblock A linearly implicit structure-preserving scheme for the
  {C}amassa-{H}olm equation based on multiple scalar auxiliary variables
  approach.
\newblock {\em J. Sci. Comput.}, 83(1):Paper No. 20, 20, 2020.

\bibitem{LSR2019MC}
Xiaoli Li, Jie Shen, and Hongxing Rui.
\newblock Energy stability and convergence of {SAV} block-centered finite
  difference method for gradient flows.
\newblock {\em Math. Comp.}, 88(319):2047--2068, 2019.

\bibitem{L2019arx1}
Zhengguang Liu.
\newblock Efficient invariant energy quadratization and scalar auxiliary
  variable approaches without bounded below restriction for phase field models.
\newblock {\em arXiv}, 1906.03621, 2019.

\bibitem{L2019arx2}
Zhengguang Liu.
\newblock Energy stable schemes for gradient flows based on novel auxiliary
  variable with energy bounded above.
\newblock {\em arXiv}, 1907.04142, 2019.

\bibitem{LL2019AML}
Zhengguang Liu and Xiaoli Li.
\newblock Efficient modified techniques of invariant energy quadratization
  approach for gradient flows.
\newblock {\em Appl. Math. Lett.}, 98:206--214, 2019.

\bibitem{LL2019arx2}
Zhengguang Liu and Xiaoli Li.
\newblock Step-by-step solving schemes based on scalar auxiliary variable and
  invariant energy quadratization approaches for gradient flows.
\newblock {\em arXiv}, 2001.00812, 2019.

\bibitem{MR4097160}
Zhengguang Liu and Xiaoli Li.
\newblock The exponential scalar auxiliary variable ({E}-{SAV}) approach for
  phase field models and its explicit computing.
\newblock {\em SIAM J. Sci. Comput.}, 42(3):B630--B655, 2020.

\bibitem{MF2001}
T.~Matsuo and D.~Furihata.
\newblock Dissipative or conservative finite-difference schemes for
  complex-valued nonlinear partial differential equations.
\newblock {\em J. Comput. Phys.}, 171(2):425--447, 2001.

\bibitem{MSFM2002}
T.~Matsuo, M.~Sugihara, D.~Furihata, and M.~Mori.
\newblock Spatially accurate dissipative or conservative finite difference
  schemes derived by the discrete variational method.
\newblock {\em Japan J. Indust. Appl. Math.}, 19:311--330, 2002.

\bibitem{MQR1998}
Robert~I. McLachlan, G.~R.~W. Quispel, and Nicolas Robidoux.
\newblock Unified approach to {H}amiltonian systems, {P}oisson systems,
  gradient systems, and systems with {L}yapunov functions or first integrals.
\newblock {\em Phys. Rev. Lett.}, 81(12):2399--2403, 1998.

\bibitem{MQR1999}
Robert~I. McLachlan, G.~R.~W. Quispel, and Nicolas Robidoux.
\newblock Geometric integration using discrete gradients.
\newblock {\em R. Soc. Lond. Philos. Trans. Ser. A Math. Phys. Eng. Sci.},
  357(1754):1021--1045, 1999.

\bibitem{SX2018SINUM}
Jie Shen and Jie Xu.
\newblock Convergence and error analysis for the scalar auxiliary variable
  ({SAV}) schemes to gradient flows.
\newblock {\em SIAM J. Numer. Anal.}, 56(5):2895--2912, 2018.

\bibitem{SXY2019JCP}
Jie Shen, Jie Xu, and Jiang Yang.
\newblock The scalar auxiliary variable ({SAV}) approach for gradient flows.
\newblock {\em J. Comput. Phys.}, 353:407--416, 2018.

\bibitem{SXY2019SIREV}
Jie Shen, Jie Xu, and Jiang Yang.
\newblock A new class of efficient and robust energy stable schemes for
  gradient flows.
\newblock {\em SIAM Rev.}, 61(3):474--506, 2019.

\bibitem{SH1977}
J.~Swift and P.~C. Hohenberg.
\newblock Hydrodynamic fluctuations at the convective instability.
\newblock {\em Phys. Rev. A}, 15:319--328, 1977.

\bibitem{Y2016JCP}
Xiaofeng Yang.
\newblock Linear, first and second-order, unconditionally energy stable
  numerical schemes for the phase field model of homopolymer blends.
\newblock {\em J. Comput. Phys.}, 327:294--316, 2016.

\bibitem{YH2017JCP}
Xiaofeng Yang and Daozhi Han.
\newblock Linearly first- and second-order, unconditionally energy stable
  schemes for the phase field crystal model.
\newblock {\em J. Comput. Phys.}, 330:1116--1134, 2017.

\bibitem{YD2020JCP}
Zhiguo Yang and Suchuan Dong.
\newblock A roadmap for discretely energy-stable schemes for dissipative
  systems based on a generalized auxiliary variable with guaranteed positivity.
\newblock {\em Journal of Computational Physics}, 404:109121, 2020.

\bibitem{ZPV1995}
Fei Zhang, V\'{\i}ctor~M. P\'{e}rez-Garc\'{\i}a, and Luis V\'{a}zquez.
\newblock Numerical simulation of nonlinear {S}chr\"{o}dinger systems: a new
  conservative scheme.
\newblock {\em Appl. Math. Comput.}, 71(2-3):165--177, 1995.

\bibitem{ZQS2020AML}
Hong Zhang, Xu~Qian, and Songhe Song.
\newblock Novel high-order energy-preserving diagonally implicit
  {R}unge-{K}utta schemes for nonlinear {H}amiltonian {ODE}s.
\newblock {\em Appl. Math. Lett.}, 102:106091, 9, 2020.

\bibitem{ZWY2017IJNME}
Jia Zhao, Qi~Wang, and Xiaofeng Yang.
\newblock Numerical approximations for a phase field dendritic crystal growth
  model based on the invariant energy quadratization approach.
\newblock {\em Internat. J. Numer. Methods Engrg.}, 110(3):279--300, 2017.

\bibitem{QJ2019JCP}
Qingqu Zhuang and Jie Shen.
\newblock Efficient {SAV} approach for imaginary time gradient flows with
  applications to one- and multi-component {B}ose-{E}instein condensates.
\newblock {\em Journal of Computational Physics}, 396:72 -- 88, 2019.

\end{thebibliography}

\end{document}